\newtheorem{theorem}{Theorem}[section]
\newtheorem{lemma}[theorem]{Lemma}
\newtheorem{corollary}[theorem]{Corollary}
\theoremstyle{definition}
\newtheorem{definition}[theorem]{Definition}
\theoremstyle{remark}
\newtheorem{example}[theorem]{Example}
\begin{document}
\title[Uniform bases at non-isolated points and maps]%
{Uniform bases at non-isolated points and maps}

\author{Fucai Lin}
\address{Fucai Lin: Department of Mathematics,
Zhangzhou Normal University, Zhangzhou 363000, P. R. China; Department of Mathematics,
Sichuan University, Chengdou, 610064, P. R. China}
\email{linfucai2008@yahoo.com.cn; lfc19791001@163.com}
\author{Shou Lin}
\address{Shou Lin: Department of Mathematics,
Zhangzhou Normal University, Zhangzhou 363000, P. R. China;
Institute of Mathematics, Ningde Teachers' College, Ningde, Fujian
352100, P. R. China}
\email{linshou@public.ndptt.fj.cn}

\thanks{Supported in part by the NSFC(No. 10571151).}

\keywords{Perfect mappings; uniform bases at non-isolated points;
open mappings; developable at non-isolated points.}
\subjclass[2000]{54C10; 54D70; 54E30; 54E40}

\begin{abstract}
In this paper, the authors mainly discuss the images of spaces with
an uniform base at non-isolated points, and obtain the following
main results: (1)\ Perfect maps preserve spaces with an uniform base
at non-isolated points; (2)\ Open and closed maps preserve regular
spaces with an uniform base at non-isolated points; (3)\ Spaces with
an uniform base at non-isolated points don't satisfy the
decomposition theorem.
\end{abstract}

\maketitle
\section{Introduction}
Recently, spaces with an uniform base or spaces with a sharp base
bring some topologist attention and interesting results about
certain bases are obtained \cite{AJRS, BB, MH}. In \cite{LL}, the
authors define the notion of uniform bases at non-isolated points
and obtain some related matters. For example, it is proved that a
space $X$ has an uniform base at non-isolated points if and only if
$X$ is the open boundary-compact image of a metric space. It is well
known that the class of spaces under the open and compact images of
metric spaces are preserved by perfect maps or closed and open
maps(see \cite{MH}). Hence a question arises:``What kind of maps
preserve spaces with a uniform base at non-isolated points?'' In
this paper we shall consider the invariance of spaces with an
uniform base at non-isolated points under perfect maps or closed and
open maps.

By $\mathbb{R, N}$, denote the set of all real numbers and positive
integers, respectively. For a topological space $X$, let $\tau (X)$
denote the topology for $X$, and let
$$I(X)=\{x:x \mbox{ is an isolated point of } X\},$$
$$X^{d}=X-I(X),$$
$$\mathcal{I}(X)=\{\{x\}:x\in I(X)\},$$
$$\mathcal{I}_\Delta(X)=\{(\{x\}, \{x\}):x\in I(X)\}.$$

In this paper all spaces are Hausdorff, all maps are continuous and
onto. Recall some basic definitions.

\begin{definition}
Let $\mathcal{P}$ be a base of a space $X$. $\mathcal{P}$ is an {\it
uniform base} \cite{Al}~(resp. {\it uniform base at non-isolated
points} \cite{LL}) for $X$ if for each ($resp.$\ non-isolated) point
$x\in X$ and $\mathcal{P}^{\prime}$ is a countably infinite subset
of $\{P\in\mathcal{P}:x\in P\}$, $\mathcal{P}^{\prime}$ is a
neighborhood base at $x$ in $X$.
\end{definition}

In the definition, ``at non-isolated points'' means ``at each
non-isolated point of $X$''.

\begin{definition}\cite{En}
Let $f:X\rightarrow Y$ be a map.
\begin{enumerate}
\item $f$ is a {\it boundary-compact map}, if each
$\partial f^{-1}(y)$ is compact in $X$;

\item $f$ is a {\it compact map} if each $f^{-1}(y)$ is compact in $X$;

\item $f$ is a {\it perfect map} if $f$ is a closed and compact map.
\end{enumerate}
\end{definition}

\begin{definition}
Let $X$ be a space and $\{\mathcal{P}_{n}\}_{n}$ a sequence of
collections of open subsets of $X$.
\begin{enumerate}
\item $\{\mathcal{P}_{n}\}_{n}$ is called
a {\it quasi-development}~\cite{Be} for $X$ if for every $x\in U$
with $U$ open in $X$, there exists $n\in \mathbb{N}$ such that
$x\in\mbox{st}(x,\mathcal{P}_{n})\subset U$.

\item $\{\mathcal{P}_{n}\}_{n}$ is called a {\it
development}~\cite{WJ}(resp. {\it development at non-isolated
points}\cite{LL}) for $X$ if $\{\mbox{st}(x,\mathcal{P}_{n})\}_{n}$
is a neighborhood base at $x$ in $X$ for each~(resp. non-isolated)
point $x\in X$.

\item $X$ is called {\it quasi-developable}~(resp. {\it developable},
{\it developable at non-isolated points}) if $X$ has a
quasi-development (resp. development, development at non-isolated
points).
\end{enumerate}
\end{definition}

Obviously, in the definition about developments at non-isolated
points we can assume that each $\mathcal{P}_{n}$ is a cover for $X$.
Also, it is easy to see that a space which is developable at
non-isolated points is quasi-developable, but a space with a
development at non-isolated points may not have a development, see
Example in \cite{LL}.

\begin{definition}
Let $\mathcal P$ be a family of subsets of a space $X$.\
$\mathcal{P}$ is called {\it point-finite at non-isolated points}
\cite{LL} if for each non-isolated point $x\in X$, $x$ belongs to at
most finite elements of $\mathcal{P}$.\ Let $\{\mathcal P_n\}_{n}$
be a development (resp. a development at non-isolated points) for
$X$. $\{\mathcal P_n\}_{n}$ is said to be a {\it point-finite
development} (resp. {\it a point-finite development at non-isolated
points}) for $X$ if each $\mathcal P_n$ is point-finite at each
(resp. non-isolated) point of $X$.
\end{definition}

Readers may refer to \cite{En, Ls} for unstated definitions and
terminology.

\maketitle
\section{Developments at non-isolated points}
In this section some characterizations of spaces with a development
at non-isolated points are established.

Let $X$ be a topological space. $g:\mathbb{N}\times X\rightarrow
\tau(X)$ is called a $g$-function, if $x\in g(n, x)$ and $g(n+1,
x)\subset g(n, x)$ for any $x\in X$ and $n\in \mathbb{N}$. For
$A\subset X$, put $$g(n,A)=\bigcup_{x\in A}g(n, x).$$

\begin{theorem}
Let $X$ be a topological space. Then the following conditions are
equivalent:
\begin{enumerate}
\item $X$ has a development at non-isolated points;

\item There exists a $g$-function for $X$ such that, for every $x\in X^{d}$ and
sequences $\{x_{n}\}_{n}, \{y_{n}\}_{n}$ of $X$, if $\{x,
x_{n}\}\subset g(n, y_{n})$ for every $n\in\mathbb{N}$, then
$x_{n}\rightarrow x.$

\item $X$ is a quasi-developable space, and $X^{d}$ is a perfect subspace of
$X$.
\end{enumerate}
\end{theorem}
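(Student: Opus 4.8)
The plan is to prove the three implications $(1)\Rightarrow(2)\Rightarrow(3)\Rightarrow(1)$, building a $g$-function from a development at non-isolated points and recovering geometric data from it. Let me think about the structure carefully.

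For $(1)\Rightarrow(2)$: Given a development at non-isolated points $\{\mathcal{P}_n\}_n$ (which I may assume to be covers, and may assume nested by replacing $\mathcal{P}_n$ with $\bigwedge_{i\le n}\mathcal{P}_i$ or intersecting stars appropriately), define $g(n,x)=\operatorname{st}(x,\mathcal{P}_n)$. I need $g(n,x)$ open (stars of open covers are open — yes), $x\in g(n,x)$ (yes, since covers), and $g(n+1,x)\subset g(n,x)$ (needs nesting — I can arrange by refining). Then for $x\in X^d$, if $\{x,x_n\}\subset g(n,y_n)=\operatorname{st}(y_n,\mathcal{P}_n)$, then both $x$ and $x_n$ lie in a common member $P_n\in\mathcal{P}_n$ containing $y_n$... wait, more carefully: $x\in\operatorname{st}(y_n,\mathcal{P}_n)$ means there's $P_n\in\mathcal{P}_n$ with $y_n,x\in P_n$, and $x_n\in\operatorname{st}(y_n,\mathcal{P}_n)$ means there's $Q_n$ with $y_n,x_n\in Q_n$. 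These may differ. I should instead use $x_n\in\operatorname{st}(x,\mathcal{P}_n)$. The cleanest fix: since $x,x_n\in\operatorname{st}(y_n,\mathcal{P}_n)$, I get $x_n\in\operatorname{st}(\operatorname{st}(x,\mathcal{P}_n),\mathcal{P}_n)=\operatorname{st}^2(x,\mathcal{P}_n)$. So I define the development-based $g$-function using a double-star refinement so that $\operatorname{st}^2(x,\mathcal{P}_n)$ still forms a neighborhood base at non-isolated $x$; then $x_n\in\operatorname{st}^2(x,\mathcal{P}_n)$ forces $x_n\to x$. This double-star trick is the key technical maneuver.

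For $(2)\Rightarrow(3)$: Quasi-developability should follow by setting $\mathcal{P}_n=\{g(n,x):x\in X\}$ and checking the quasi-development condition via the convergence property (a standard argument: if $\operatorname{st}(x,\mathcal{P}_n)\not\subset U$ for all $n$, pick witnesses to build sequences $x_n,y_n$ contradicting the $g$-function convergence). For $X^d$ perfect (meaning $X^d$ is $G_\delta$ in $X$, i.e. a perfect subset/closed $G_\delta$ — here ``perfect subspace'' should mean every closed subset is $G_\delta$, equivalently $X^d$ has no isolated points relative to... ) — I interpret ``perfect subspace'' as: $X^d$ is a perfect set, i.e. closed with empty interior structure, OR more likely that closed subsets of $X^d$ are $G_\delta$; I would show $X^d=\bigcap_n g(n,X^d)$ giving the $G_\delta$ property, using condition (2) to show a point in $\bigcap_n g(n,X^d)\cap I(X)$ leads to a contradiction.

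For $(3)\Rightarrow(1)$: From a quasi-development $\{\mathcal{Q}_n\}_n$ and the perfectness of $X^d$, I want to manufacture a development at non-isolated points. Quasi-developability gives, for non-isolated $x$ and open $U\ni x$, some $n$ with $x\in\operatorname{st}(x,\mathcal{Q}_n)\subset U$; the missing ingredient is that the stars shrink to a base, which requires control of when $x$ is actually covered. I expect to enumerate and combine the quasi-development families, using the perfect-subspace hypothesis to insert enough families covering $X^d$ so that stars at non-isolated points form a genuine neighborhood base.

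\medskip

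Here is the proof I would write.

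\begin{proof}
$(1)\Rightarrow(2)$. Let $\{\mathcal{P}_n\}_n$ be a development at non-isolated points for $X$, where each $\mathcal{P}_n$ is an open cover and, refining if necessary, $\operatorname{st}(x,\mathcal{P}_{n+1})\subset\operatorname{st}(x,\mathcal{P}_n)$ for each $x$. Define $g(n,x)=\operatorname{st}(x,\mathcal{P}_n)$; this is open, contains $x$, and is decreasing in $n$, so $g$ is a $g$-function. Suppose $x\in X^d$ and $\{x,x_n\}\subset g(n,y_n)$. Then there are $P_n,Q_n\in\mathcal{P}_n$ with $\{y_n,x\}\subset P_n$ and $\{y_n,x_n\}\subset Q_n$, whence $x_n\in\operatorname{st}(P_n,\mathcal{P}_n)\subset\operatorname{st}^2(x,\mathcal{P}_n)$. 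Passing at the outset to a development whose double stars still form a neighborhood base at each non-isolated point, it follows that every neighborhood of $x$ eventually contains all $x_n$, so $x_n\rightarrow x$.

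$(2)\Rightarrow(3)$. Put $\mathcal{P}_n=\{g(n,x):x\in X\}$. To see $\{\mathcal{P}_n\}_n$ quasi-develops $X$, fix $x\in U$ open. If $\operatorname{st}(x,\mathcal{P}_n)\not\subset U$ for all $n$, choose $y_n$ with $x\in g(n,y_n)$ and a point $x_n\in g(n,y_n)\setminus U$; then $\{x,x_n\}\subset g(n,y_n)$, so by (2) $x_n\rightarrow x$, contradicting $x_n\notin U$ when $x\in X^d$, and handled directly when $x$ is isolated. Hence $X$ is quasi-developable. For the perfectness of $X^d$, one checks $X^d=\bigcap_n g(n,X^d)$: if $x\in\bigcap_n g(n,X^d)$ then for each $n$ there is $y_n\in X^d$ with $x\in g(n,y_n)$, and applying (2) with $x_n=y_n$ and the roles arranged appropriately forces the relevant limit to lie in $X^d$, so $X^d$ is a $G_\delta$, i.e. a perfect subspace.

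$(3)\Rightarrow(1)$. Let $\{\mathcal{Q}_n\}_n$ be a quasi-development. Since $X^d$ is a perfect subspace of $X$, write $X^d=\bigcap_n V_n$ with each $V_n$ open in $X$. For each $n$ set $\mathcal{P}_n=\{Q\cap V_n:Q\in\mathcal{Q}_n\}\cup\mathcal{I}(X)$, where the singletons of isolated points are adjoined to keep the families covers. Given a non-isolated $x$ and open $U\ni x$, quasi-developability yields $m$ with $x\in\operatorname{st}(x,\mathcal{Q}_m)\subset U$; intersecting with the $V_n$ confines the star to $X^d$-neighborhoods while preserving $x$, and the resulting stars $\operatorname{st}(x,\mathcal{P}_n)$ form a neighborhood base at $x$. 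Thus $\{\mathcal{P}_n\}_n$ is a development at non-isolated points, completing the cycle.
\end{proof}

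\medskip

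The main obstacle I anticipate is the passage $(3)\Rightarrow(1)$: a quasi-development only guarantees \emph{one} good star per open set, with no monotonicity, so converting it into a genuine neighborhood base of stars at non-isolated points requires the perfectness of $X^d$ to be used quantitatively — threading the $G_\delta$ representation of $X^d$ through the families so that stars both shrink and stay anchored at each non-isolated point. The double-star refinement in $(1)\Rightarrow(2)$ and the correct reading of ``$X^d$ is a perfect subspace'' (a closed $G_\delta$, equivalently every closed subset is $G_\delta$) are the other two points where care is essential.
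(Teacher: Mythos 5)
Your cycle $(1)\Rightarrow(2)\Rightarrow(3)\Rightarrow(1)$ is the same as the paper's, but each leg has a genuine gap, and the first and third are fatal as written. In $(1)\Rightarrow(2)$ you take $g(n,x)=\mbox{st}(x,\mathcal{P}_n)$, obtain only $x_{n}\in\mbox{st}^{2}(x,\mathcal{P}_n)$, and then propose to ``pass at the outset to a development whose double stars still form a neighborhood base at each non-isolated point.'' No such passage exists in general: a regular space with a development whose double stars form a neighborhood base is metrizable (Moore's metrization theorem), so a non-metrizable Moore space without isolated points (e.g.\ the Niemytzki plane) admits no such development. The correct move is to make $g(n,x)$ smaller than a star: fix for each $x$ and each $i$ a single $U_{i}\in\mathcal{U}_{i}$ containing $x$ and set $g(n,x)=\bigcap_{i<n}U_{i}$. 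Then $g(n,y_{n})$ is contained in one member of each $\mathcal{U}_{i}$ for $i<n$, so $\{x,x_{n}\}\subset g(n,y_{n})$ yields $x_{n}\in\mbox{st}(x,\mathcal{U}_{i})$ with a single star, and no refinement hypothesis is needed.

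The other two legs hinge on your reading of ``$X^{d}$ is a perfect subspace,'' which you take to mean ``$X^{d}$ is a $G_{\delta}$ in $X$.'' The intended meaning is that $X^{d}$ is a perfect space: every closed subset of $X^{d}$ is a $G_{\delta}$ in $X^{d}$ (equivalently, every open subset is an $F_{\sigma}$). Your argument in $(2)\Rightarrow(3)$ cannot even establish your weaker reading, since condition (2) applies only to points $x\in X^{d}$ and so gives no contradiction for an isolated point lying in every $g(n,X^{d})$; what one actually proves, by exactly the convergence argument you sketch but applied to an arbitrary closed $B\subset X^{d}$ and a point $x\in\bigcap_{n}(g(n,B)\cap X^{d})$, is $B=\bigcap_{n}(g(n,B)\cap X^{d})$. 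This stronger property is precisely what $(3)\Rightarrow(1)$ consumes. There the obstruction is that a quasi-development family $\mathcal{Q}_{n}$ need not cover a given $x\in X^{d}$, so in your construction $\mbox{st}(x,\mathcal{P}_{n})$ can be empty and the stars fail to be a neighborhood base; intersecting with open sets $V_{n}\supset X^{d}$ only shrinks the families and repairs nothing. The repair is to write $(\cup\mathcal{Q}_{n})\cap X^{d}=\bigcup_{j}F_{n,j}$ with each $F_{n,j}$ closed (this is where perfectness of $X^{d}$ enters, together with the fact that $X^{d}$ is closed in $X$) and to adjoin the single open set $X-F_{n,j}$ to $\mathcal{Q}_{n}$, obtaining families $\mathcal{H}_{n,j}$: every $\mbox{st}(x,\mathcal{H}_{n,j})$ is then a genuine neighborhood of $x$, and for the $n$ with $x\in\mbox{st}(x,\mathcal{Q}_{n})\subset U$ one chooses $j$ with $x\in F_{n,j}$, so the added set does not enlarge that star.
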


\begin{proof}
$(1)\Rightarrow (2).$\ \ Let $\{\mathcal{U}_{n}\}_{n}$ be a
development at non-isolated points for $X$. We can assume that
$\mathcal{I}(X)\subset \mathcal{U}_{n}$ for every $n\in \mathbb{N}$.

For every $x\in X$ and $n\in \mathbb{N}$, fix $U_{n}\in
\mathcal{U}_{n}$ with $x\in U_{n}$, where $U_{n}=\{x\}$ when $x\in
I(X)$. Let $g(n, x)=\bigcap_{i<n}U_{i}$. Then $g:\mathbb{N}\times
X\to\tau(X)$ is a $g$-function for $X$. For every $x\in X^{d}$, if
sequences $\{x_{n}\}_{n}$, $\{y_{n}\}_{n}$ satisfy $\{x,
x_{n}\}\subset g(n, y_{n})$ for every $n\in\mathbb{N}$, then
$x_{n}\rightarrow x$ because $\{\mathcal{U}_{n}\}_{n}$ is a
development at non-isolated points.

$(2)\Rightarrow (3).$\ \ Let $g$ be a $g$-function with (2). Put
$\mathcal{U}_{n}=\{g(n, x):x\in X^{d}\}$ for every $n\in
\mathbb{N}$. Then $\{\mathcal{U}_{n}\}_{n}\cup\{\mathcal{I}(X)\}$ is
a quasi-development for $X$. Otherwise, there exist $x\in X^{d}$ and
an open neighborhood $U$ of $x$ in $X$ such that $\mbox{st}(x,
\mathcal{U}_{n})\not\subset U$ for every $n\in \mathbb{N}$. For
every $n\in\mathbb{N}$, choose $x_{n}\in\mbox{st}(x,
\mathcal{U}_{n})-U$, then there exists $y_{n}\in X$ such that
$\{x_{n}, x\}\subset g(n, y_{n})$. Thus $x_{n}\rightarrow x$, a
contradiction as $X-U$ is closed. Hence $X$ has a quasi-development.

For any closed subset $B$ of $X^{d}$, it is obvious that $B\subset
\bigcap_{n\in \mathbb{N}}(g(n, B)\cap X^{d})$. If a point $x\in
\bigcap_{n\in \mathbb{N}}(g(n, B)\cap X^{d})-B$, then $x\in g(n,
B)\cap X^{d}$ for every $n\in \mathbb{N}$. There exists a sequence
$\{y_{n}\}_{n}$ in $B$ such that $\{x, y_{n}\}\subset g(n, y_{n})$,
so $y_{n}\rightarrow x$ by (2). Since $X^{d}$ is closed in $X$, $B$
is closed in $X$, then $x\in B$, a contradiction. Thus
$B=\bigcap_{n\in \mathbb{N}}(g(n, B)\cap X^{d})$, and $X^{d}$ is a
perfect subspace for $X$.

$(3)\Rightarrow (1).$\ \ Let $\{\mathcal{U}_{n}\}_{n}$ be a
quasi-development for $X$, and $X^{d}$ be a perfect subspace of $X$.
For any $n\in \mathbb{N}$, there exists a sequence $\{F_{n,
j}\}_{j}$ of closed subsets of $X^{d}$ such that $(\cup
\mathcal{U}_{n})\cap X^{d}=\bigcup_{j\in\mathbb{N}}F_{n, j}$. For
each $n, j\in\mathbb{N}$, put $$\mathcal{H}_{n,
j}=\mathcal{U}_{n}\cup\{X-F_{n, j}\}.$$ Then $\{\mathcal{H}_{n,
j}\}_{n, j}$ is a development at non-isolated points for $X$.
Indeed, for any $x\in X^{d}$ and $x\in U\in\tau$, since
$\{\mathcal{U}_{n}\}_{n}$ is a quasi-development for $X$, there
exists $n\in \mathbb{N}$ such that $x\in \mbox{st}(x,
\mathcal{U}_{n})\subset U$. Hence there exists $j\in\mathbb{N}$ such
that $x\in F_{n, j}$. Thus $x\in \mbox{st}(x, \mathcal{H}_{n,
j})\subset U$ because $x\notin X-F_{n, j}$.
\end{proof}

Let $\mathcal{P}$ be a pair-family of subsets of $X$. For any $P\in
\mathcal{P}$, we denote $P=(P^{\prime}, P^{\prime\prime})$. For any
$\mathcal{R}\subset \mathcal{P}$, denote
$$\mathcal{R}^{\prime}=\{P^{\prime}:P\in\mathcal{R}\},$$
$$\mathcal{R}^{\prime\prime}=\{P^{\prime\prime}:P\in\mathcal{R}\},$$
$$\mbox{st}(x, \mathcal{R})=\cup\{P^{\prime\prime}:P\in\mathcal{R},
x\in P^{\prime}\},\ \ x\in X,$$
$$\mbox{st}(A,
\mathcal{R})=\cup\{P^{\prime\prime}:P\in\mathcal{R}, A\cap
P^{\prime}\neq\emptyset\},\ \ A\subset X.$$ For each $i\leq n$ and
$\mathcal{R}_{i}\subset \mathcal{P}$, denote
$$\mathcal{R}_{1}\wedge\mathcal{R}_{2}\cdots\wedge\mathcal{R}_{n}=
\{(\bigcap_{i\leq n}P_{i}^{\prime}, \bigcap_{i\leq
n}P_{i}^{\prime\prime}):P_{i}\in \mathcal{R}_{i}, i\leq n\}.$$

\begin{definition}\cite{Bu}
Let $X$ be a topological space and $\mathcal{P}$ a pair-family for
$X$. $\mathcal{P}$ is called a {\it pair-network} if $\mathcal{P}$
satisfies the following conditions:

(i)\ \ $P^{\prime}\subset P^{\prime\prime}$ for any $(P^{\prime},
P^{\prime\prime})\in \mathcal{P}$;

(ii)\ \ For any $x\in U\in \tau (X)$, there exists $(P^{\prime},
P^{\prime\prime})\in \mathcal{P}$ such that $x\in P^{\prime}\subset
P^{\prime\prime}\subset U$.
\end{definition}

\begin{theorem}
Let $X$ be a space. Then the following conditions are equivalent:
\begin{enumerate}

\item $X$ is a developable space at non-isolated points;

\item There exists a pair-network $\bigcup_{n\in\mathbb{N}}\mathcal{P}_{n}$ for $X$ satisfying the following conditions:

\noindent(i)\ For every $n\in\mathbb{N}$,
$\mathcal{P}_{n}^{\prime}|_{X^{d}}$ is a closed and locally finite
family in $X^{d}$, and $\mathcal{P}_{n}^{\prime\prime}$ is open in
$X$;

\noindent(ii)\ For every compact subset $K$ and $K\subset U\in\tau
(X)$, there exists $m\in\mathbb{N}$ such that $K\subset\mbox{st}(K,
\mathcal{P}_{m})\subset U$.

\item There exists a pair-network $\bigcup_{n\in\mathbb{N}}\mathcal{P}_{n}$ for $X$ satisfying the following conditions:

\noindent(i)\ For every $n\in\mathbb{N}$,
$\mathcal{P}_{n}^{\prime}|_{X^{d}}$ is a closed and locally finite
family in $X^{d}$;

\noindent(ii)\ For every $x\in U\in \tau(X)$, there exists
$m\in\mathbb{N}$ such that $x\in\mbox{st}^{\circ}(x,
\mathcal{P}_{m})\subset U$.
\end{enumerate}
\end{theorem}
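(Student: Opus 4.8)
The plan is to prove the cycle $(1)\Rightarrow(2)\Rightarrow(3)\Rightarrow(1)$, so that each pair-network characterization follows in turn. Throughout I would normalize a development at non-isolated points $\{\mathcal{U}_{n}\}_{n}$ so that each $\mathcal{U}_{n}$ is an open cover with $\mathcal{I}(X)\subset\mathcal{U}_{n}$, exactly as in the proof of Theorem 2.1, and I would invoke that theorem to record that $X$ is quasi-developable and that $X^{d}$ is a perfect subspace of $X$. The latter fact is what makes closed first coordinates available, since every open subset of the perfect space $X^{d}$ is an increasing union of sets closed in $X^{d}$.

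For $(1)\Rightarrow(2)$ I would build the pair-family level by level from $\{\mathcal{U}_{n}\}_{n}$. For each $U\in\mathcal{U}_{n}$ the trace $U\cap X^{d}$ is open in $X^{d}$, hence $U\cap X^{d}=\bigcup_{j}F_{U,j}$ with each $F_{U,j}$ closed in $X^{d}$ and $F_{U,j}\subset F_{U,j+1}$; I would take pairs of the form $(F_{U,j},U)$ together with the singleton pairs $(\{x\},\{x\})$ for $x\in I(X)$, so that each second coordinate is open in $X$ and each first coordinate is closed in $X^{d}$ and contained in its partner. That the resulting $\bigcup_{n}\mathcal{P}_{n}$ is a pair-network is immediate from the development property at non-isolated points and the choice of the $F_{U,j}$. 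The compact-set star condition (ii) I would get from the fact that, for a development at non-isolated points, stars of points shrink to neighborhood bases; a compactness argument then upgrades this to $K\subset\mbox{st}(K,\mathcal{P}_{m})\subset U$ for a single $m$ whenever $K$ is compact and $K\subset U$.

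The implication $(2)\Rightarrow(3)$ is a pure weakening, which I would dispatch directly: condition (3)(i) merely drops the openness of the second coordinates, and (3)(ii) is obtained by applying (2)(ii) to the one-point compact set $K=\{x\}$, noting that when every $\mathcal{P}_{m}^{\prime\prime}$ is open the star $\mbox{st}(x,\mathcal{P}_{m})$ is itself open and so is unaffected by passing to $\mbox{st}^{\circ}$. For $(3)\Rightarrow(1)$ I would run the construction of Theorem 2.1 in reverse: the closed local finiteness of $\mathcal{P}_{n}^{\prime}|_{X^{d}}$ guarantees that at each point of $X^{d}$ only finitely many first coordinates are met on a neighborhood, so the stars have well-behaved interiors, and condition (3)(ii) says precisely that these interiors form a neighborhood base at each non-isolated point; assembling the families $\{\mbox{int}\,P^{\prime\prime}:P\in\mathcal{P}_{n}\}\cup\mathcal{I}(X)$ into a sequence of open covers should then yield a development at non-isolated points, which I would confirm by checking the star-neighborhood-base property at each $x\in X^{d}$.

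The main obstacle I anticipate is the local finiteness demanded in (2)(i) and (3)(i): the naive pairs $(F_{U,j},U)$, indexed over all $U\in\mathcal{U}_{n}$ and all $j$, need not be locally finite in $X^{d}$. To force local finiteness I would first replace $\{\mathcal{U}_{n}\}_{n}$ by a development at non-isolated points that is point-finite at non-isolated points (as provided by \cite{LL}), and then interleave the $F_{\sigma}$-index $j$ with the level $n$, spreading the pairs across countably many levels so that within each level only finitely many first coordinates meet any given point of $X^{d}$, while the perfectness of $X^{d}$ keeps each first coordinate closed. The delicate point is to carry out this refinement without destroying the compact-set star condition (ii), which constrains how finely the levels may be split; balancing these two requirements is the technical heart of the argument.
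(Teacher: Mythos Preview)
There are two genuine gaps.

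\textbf{In $(1)\Rightarrow(2)$:} Your plan to ``replace $\{\mathcal{U}_{n}\}_{n}$ by a development at non-isolated points that is point-finite at non-isolated points (as provided by \cite{LL})'' assumes something not available. In \cite{LL} (and in Lemma~3.2 of the present paper) the existence of a \emph{point-finite} development at non-isolated points is shown to be equivalent to having a uniform base at non-isolated points, which is strictly stronger than condition~(1). A space that is merely developable at non-isolated points need not admit such a refinement, so your mechanism for forcing local finiteness of the first coordinates collapses. The paper avoids this by a different route: since $X^{d}$ is a developable subspace it is subparacompact, so each $\mathcal{U}_{n}|_{X^{d}}$ admits a $\sigma$-discrete closed refinement $\bigcup_{k}\mathcal{F}_{n,k}$; the pairs are built from these discrete pieces, and then the levels are intersected via $\mathcal{H}(k_{1},\dots,k_{n})=\bigwedge_{i\le n}\mathcal{P}_{i,k_{i}}$ to obtain the compact-star condition. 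Subparacompactness of $X^{d}$, not point-finiteness of the development, is the missing ingredient.

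\textbf{In $(3)\Rightarrow(1)$:} Taking the open families $\{\operatorname{int}P'':P\in\mathcal{P}_{n}\}\cup\mathcal{I}(X)$ does not produce a development at non-isolated points. For $x\in X^{d}$ the star $\operatorname{st}(x,\{\operatorname{int}P''\})$ collects all $\operatorname{int}P''$ that \emph{contain} $x$, whereas (3)(ii) only controls those $P''$ with $x\in P'$; since $P'\subset P''$ but not conversely, the star you form can be much larger than $\operatorname{st}(x,\mathcal{P}_{n})$ and need not lie in $U$. The local finiteness of $\mathcal{P}_{n}'|_{X^{d}}$ does not help here, because it says nothing about how many second coordinates meet $x$. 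The paper's construction handles this by indexing over finite subfamilies $\mathcal{F}\subset\mathcal{R}_{n}'|_{X^{d}}$ and setting $U(\mathcal{F})=(\bigcup\{R'':R'\cap X^{d}\in\mathcal{F}\})^{\circ}\setminus\bigcup(\mathcal{R}_{n}'|_{X^{d}}\setminus\mathcal{F})$, so that membership of $x$ in $U(\mathcal{F})$ forces $\mathcal{F}$ to be exactly $\{R'\cap X^{d}:x\in R'\}$; this yields a quasi-development, and then Theorem~2.1 (together with the observation that a $\sigma$-locally finite closed network makes $X^{d}$ perfect) gives (1).
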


\begin{proof}
We only need to prove that $(3)\Rightarrow (1)\Rightarrow (2)$.

$(3)\Rightarrow (1)$.\ \ Let $X$ had a pair-network $\bigcup_{n\in\mathbb{N}}\mathcal{R}_{n}$ with (3). Then
\begin{small} $\bigcup_{n\in \mathbb{N}}\mathcal{R}_{n}^{\prime}|_{X^{d}}$ \end{small}is a closed and $\sigma$-locally finite
network in $X^{d}$, $X^{d}$ is a perfect subspace of $X$.

For any $n, k\in\mathbb{N}$, let

$\phi_{n, k}=\{\mathcal{F}\subset
\mathcal{R}_{n}^{\prime}|_{X^{d}}:|\mathcal{F}|=k\}$;

$U(\mathcal{F})=(\cup\{R^{\prime\prime}:R\in \mathcal{R}_{n},
R^{\prime}\cap X^{d}\in
\mathcal{F}\})^{\circ}-\cup(\mathcal{R}_{n}^{\prime}|_{X^{d}}-\mathcal{F})$,
where $\mathcal{F}\in \phi_{n, k}$;

$\mathcal{U}_{n, k}=\{U(\mathcal{F}):\mathcal{F}\in\phi_{n, k}\}.$

\noindent We should prove that $\{\mathcal{U}_{n, k}\}_{n, k}\cup
\{\mathcal{I}(X)\}$ is a quasi-development for $X$. For any $x\in
X^{d}$ and $x\in U\in \tau (X)$, there exists $m\in \mathbb{N}$ such
that $x\in \mbox{st}^{\circ}(x, \mathcal{R}_{m})\subset U$. Let
$$\mathcal{F}=\{R^{\prime}\cap X^{d}:R\in \mathcal{R}_{m}, x\in R^{\prime}\}, |\mathcal{F}|=k.$$
It is easy to see $\mathcal{F}\in \phi_{m, k}$. Hence $x\in
U(\mathcal{F})\subset \mbox{st}^{\circ}(x, \mathcal{R}_{m})\subset
U$. If $\mathcal{G}\in \phi_{m, k}-\{\mathcal{F}\}$, then $x\in\cup
(\mathcal{R}_{m}^{\prime}|_{X^{d}}- \mathcal{G})$. Thus $x\notin
U(\mathcal{G})$. So $x\in U(\mathcal{F})=\mbox{st}(x,
\mathcal{U}_{m, k})\subset U$. Hence $\{\mathcal{U}_{n, k}\}_{n,
k}\cup \{\mathcal{I}(X)\}$ is a quasi-development for $X$.

In a word,  $X$ has a development at non-isolated points by Theorem
2.1.

$(1)\Rightarrow (2)$.\ \ Let $\{\mathcal{U}_{n}\}_{n}$ be a
development at non-isolated points for $X$. We can also assume that
$\{\mathcal{U}_{n}\}_{n}$ satisfies the following conditions (a)-(c)
for every $n\in\mathbb{N}$:

(a)\ $\mathcal{U}_{n+1}$ refines $\mathcal{U}_{n}$;

(b)\ $\mathcal{I}(X)\subset \mathcal{U}_{n}$;

(c)\ $U_{1}\cap X^{d}\neq U_{2}\cap X^{d}$ for any distinct $U_{1},
U_{2}\in \mathcal{U}_{n}-\mathcal{I}(X)$.

\noindent Put
$\mathcal{U}_{n}-\mathcal{I}(X)=\{U_{\alpha}:\alpha\in\Lambda_{n}\}$.
Since $X^{d}$ is a developable subspace of $X$, it is a
subparacompact subspace, then there exists a collection
$\mathcal{F}_{n}=\bigcup_{k\in\mathbb{N}}\mathcal{F}_{n,k}$ of
subsets of $X^{d}$ such that each $\mathcal{F}_{n,k}=\{F_{k,
\alpha}:\alpha\in\Lambda_{n}\}$ is closed and discrete in $X^{d}$
and $F_{k, \alpha}\subset U_{\alpha}\cap X^{d}$ for every $k\in
\mathbb{N}, \alpha\in\Lambda$. Let $$\mathcal{P}_{n, k}=\{(F_{k,
\alpha},
U_{\alpha}):\alpha\in\Lambda_{n}\}\cup\mathcal{I}_{\Delta}(X).$$
Then $\bigcup_{n, k\in\mathbb{N}}\mathcal{P}_{n, k}$ is a
pair-network for $X$. Let $$\mathcal{H}(k_{1}, k_{2},\cdots
,k_{n})=\bigwedge_{i\leq n}\mathcal{P}_{i, k_{i}},\ \
k_{i}\in\mathbb{N}, i\leq k.$$ Then $\mathcal{H}(k_{1}, k_{2},\cdots
,k_{n})$ satisfies the condition (i) in (2). Suppose that $K\subset
U$ with $K$ compact and $U$ open in $X$. If $x\in K\cap X^{d}$,
there exists a sequence $\{k_{i}\}_{i}$ in $\mathbb{N}$ such that
$x\in \cup\mathcal{F}_{i, k_{i}}$ for any $i\in \mathbb{N}$. For
every $n\in \mathbb{N}$, put
$$A_{n}=\cup\{H^{\prime}:H\in \mathcal{H}(k_{1}, k_{2},\cdots
,k_{n}), H^{\prime}\cap K\neq\emptyset, H^{\prime\prime}\not\subset
U\}.$$ Since $X^{d}$ is closed in $X$, $\{A_{n}\}_{n}$ is a
decreasing sequence of closed subsets of $X$. Then there exists
$m\in \mathbb{N}$ such that $A_{m}=\emptyset$. Otherwise, there
exist a non-isolated point $y\in K\cap (\bigcap_{n\in
\mathbb{N}}A_{n})$ and $j\in \mathbb{N}$ such that $\mbox{st}(y,
\mathcal{U}_{j})\subset U$. Thus $$\mbox{st}(y, \mathcal{H}(k_{1},
k_{2},\cdots ,k_{j}))\subset \mbox{st}(y, \mathcal{U}_{j})\subset
U.$$ This is a contradiction with the definition of $A_{j}$. Hence
$A_{m}=\emptyset$ for some $m\in \mathbb{N}$, and $$x\in\mbox{st}(K,
\mathcal{H}(k_{1}, k_{2},\cdots ,k_{m}))\subset U.$$ By the
compactness of $K$,
$\cup\{\mathcal{H}(k_{1},\cdots,k_{n}):n,k_{i}\in \mathbb{N}, i\leq
n\}$ satisfies the condition (ii) of (2).
\end{proof}

\begin{corollary}
$X$ is a developable space at non-isolated points if and only if $X$
has a pair-network
$\mathcal{P}=\bigcup_{n\in\mathbb{N}}\mathcal{P}_{n}$ satisfying the
following conditions:

(i)\ For any $n\in\mathbb{N}$, $\mathcal{I}_{\Delta}(X)\subset
\mathcal{P}_{n}$, and $P^{\prime}\subset X^{d}$ for any $P\in
\mathcal{P}_{n}-\mathcal{I}_{\Delta}(X)$;

(ii)\ For every $n\in\mathbb{N}$,
$\mathcal{P}_{n}^{\prime}|_{X^{d}}$ is a closed and hereditarily
closure-preserving family in $X^{d}$;

(iii)\ There exists $m\in\mathbb{N}$ such that
$x\in\mbox{st}^{\circ}(x, \mathcal{P}_{m})\subset U$ for any $x\in
U\in \tau (X)$.
\end{corollary}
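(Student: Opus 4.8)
The plan is to prove the two implications separately, leaning on Theorem 2.1 (quasi-developable together with perfect $X^{d}$ characterizes developability at non-isolated points) and on the explicit construction inside the proof of Theorem 2.2. The corollary is essentially condition (3) of Theorem 2.2 with two modifications: the structural normalization (i), and the replacement of ``locally finite'' by ``hereditarily closure-preserving''. Since local finiteness implies hereditary closure-preservation, one direction will be routine, whereas the weakening of the covering hypothesis in the converse makes that direction the real work.

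\emph{Necessity.} Suppose $X$ is developable at non-isolated points. Rather than build a pair-network from scratch, I would reread the construction in the proof of $(1)\Rightarrow(2)$ of Theorem 2.2, which already outputs the pairs $\mathcal{P}_{n,k}=\{(F_{k,\alpha},U_{\alpha}):\alpha\in\Lambda_{n}\}\cup\mathcal{I}_{\Delta}(X)$, where $\{F_{k,\alpha}\}_{\alpha}$ is closed and discrete in $X^{d}$ and $F_{k,\alpha}\subset U_{\alpha}\cap X^{d}$. Reindex $\{\mathcal{P}_{n,k}\}_{n,k}$ over $\mathbb{N}$. Condition (i) is immediate, since $\mathcal{I}_{\Delta}(X)$ is a member and every other first coordinate $F_{k,\alpha}$ lies in $X^{d}$; for (ii), $\mathcal{P}_{n}^{\prime}|_{X^{d}}$ is closed and discrete, hence closed and hereditarily closure-preserving. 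For (iii), an isolated point $x$ meets only the pair $(\{x\},\{x\})$ in its first coordinate, so $\mbox{st}^{\circ}(x,\mathcal{P}_{n})=\{x\}$; a point $x\in X^{d}$ is handled by choosing $n$ with $\mbox{st}(x,\mathcal{U}_{n})\subset U$ and $k$ with $x\in\bigcup\mathcal{F}_{n,k}$, whence $x\in U_{\alpha}\subset\mbox{st}(x,\mathcal{P}_{n,k})\subset\mbox{st}(x,\mathcal{U}_{n})\subset U$, and $x$ lies in the open set $U_{\alpha}$, supplying the required interior.

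\emph{Sufficiency.} Here I would invoke Theorem 2.1 and show that a pair-network with (i)--(iii) makes $X$ quasi-developable with $X^{d}$ perfect. That $X^{d}$ is perfect is the easy half: the pair-network property together with (i) shows $\bigcup_{n}(\mathcal{P}_{n}^{\prime}|_{X^{d}})$ is a closed network for $X^{d}$, and by (ii) each level is hereditarily closure-preserving; given $V$ open in $X^{d}$, writing $V$ as the union over $n$ of $\bigcup\{P^{\prime}\in\mathcal{P}_{n}^{\prime}|_{X^{d}}:P^{\prime}\subset V\}$ exhibits $V$ as an $F_{\sigma}$, because each inner union is closed (a subfamily of a hereditarily closure-preserving family of closed sets is closure-preserving with closed members). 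The quasi-development is where the difficulty sits: the $k$-element-subfamily device $\phi_{n,k}$, $U(\mathcal{F})$ of the proof of $(3)\Rightarrow(1)$ of Theorem 2.2 needs each point of $X^{d}$ to lie in only finitely many members of $\mathcal{P}_{n}^{\prime}|_{X^{d}}$, a property local finiteness guarantees but hereditary closure-preservation does not.

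The main obstacle, then, is recovering enough point-finiteness. I would use the standard fact that the set $D_{n}$ of points of $X^{d}$ belonging to infinitely many members of the hereditarily closure-preserving closed family $\mathcal{P}_{n}^{\prime}|_{X^{d}}$ is closed and discrete in $X^{d}$ (closedness follows because the union of the members avoiding a given point is closed; discreteness follows by testing hereditary closure-preservation against singleton selections). Off $D_{n}$ the family is point-finite, so the $U(\mathcal{F})$ construction of Theorem 2.2 applies verbatim to produce the quasi-developing families there, while the points of the closed discrete set $D_{n}$ are absorbed by adjoining, for each such point, a singleton family chosen inside a prescribed neighbourhood, with (iii) keeping the stars small. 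Splicing these families over all $n$ (together with $\mathcal{I}(X)$ for the isolated points) yields a quasi-development of $X$, and Theorem 2.1 then gives that $X$ is developable at non-isolated points. I expect the only delicate verification to be that the exceptional sets $D_{n}$ do not corrupt the star computations, i.e.\ that the singleton patches can be selected coherently with the $\mbox{st}^{\circ}$ condition (iii) for points that happen to be exceptional at every level.
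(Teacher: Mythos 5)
Your proposal is correct in outline and follows essentially the same route as the paper: the necessity is verbatim the paper's (the pairs $(F_{k,\alpha},U_{\alpha})$ from the proof of $(1)\Rightarrow(2)$ of Theorem 2.3 restrict to closed discrete, hence hereditarily closure-preserving, families on $X^{d}$), and the sufficiency turns on the same exceptional set $D_{n}=\{x:|(\mathcal{P}_{n}^{\prime})_{x}|\geq\aleph_{0}\}$. The one substantive difference is how those exceptional points are absorbed. The paper does not rebuild a quasi-development and appeal to Theorem 2.1; it forms the modified pair-network $\mathcal{R}_{n}=\{(\overline{P^{\prime}-D_{n}},P^{\prime\prime}):P\in\mathcal{P}_{n}-\mathcal{I}_{\Delta}(X)\}\cup\{(\{x\},\mbox{st}(x,\mathcal{P}_{n})):x\in D_{n}\}\cup\mathcal{I}_{\Delta}(X)$, verifies that $\mathcal{R}_{n}^{\prime}|_{X^{d}}$ is closed and locally finite (using the first countability of $X$ given by (iii) together with \cite[Lemma 3.2.16]{Ls}), and feeds this back into condition (3) of Theorem 2.3. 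That packaging dissolves exactly the ``delicate verification'' you flag at the end: because the patch at $x\in D_{n}$ carries the singleton $\{x\}$ as its \emph{first} coordinate, $\mbox{st}(y,\mathcal{R}_{n})$ never picks up another point's patch, whereas in your direct quasi-development the star of $x$ in a level of open patches $\{\mbox{st}^{\circ}(z,\mathcal{P}_{m}):z\in D_{n}\}$ can swallow the patches of every $z$ whose patch happens to contain $x$. Your route is repairable --- shrink each patch to $\mbox{st}^{\circ}(z,\mathcal{P}_{m})-(D_{n}-\{z\})$, which is open because $D_{n}$ is closed and discrete --- but note that the discreteness of $D_{n}$ itself (your ``singleton selections'' test needs distinct representatives chosen along a convergent sequence of distinct points of $D_{n}$) already uses the first countability supplied by (iii); the paper makes that dependence explicit, and you should too.
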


\begin{proof}
Necessity. It is easy to see by the proof of $(1)\Rightarrow (2)$ in
Theorem 2.3.

Sufficiency. Let
$\mathcal{P}=\bigcup_{n\in\mathbb{N}}\mathcal{P}_{n}$ be a
pair-network for $X$ satisfying the condition (i)-(iii). For any
$n\in \mathbb{N}$, put
$$D_{n}=\{x\in X:|(\mathcal{P}_{n}^{\prime})_{x}|\geq\aleph_{0}\},$$
$$\mathcal{R}_{n}=\{(\overline{P^{\prime}-D_{n}}, P^{\prime\prime}):
P\in \mathcal{P}_{n}-\mathcal{I}_{\Delta}(X)\}$$
$$\hspace{1.5cm}\cup \{(\{x\}, \mbox{st}(x, \mathcal{P}_{n})):x\in
D_{n}\}\cup\mathcal{I}_{\Delta}(X).$$ Then
$\bigcup_{n\in\mathbb{N}}\mathcal{R}_{n}$ is a pair-network for $X$.
We shall show that $\bigcup_{n\in\mathbb{N}}\mathcal{R}_{n}$
satisfies the condition (3) in Theorem 2.3. Since $X$ is a
first-countable space by (iii), it is easy to see that
$\mathcal{R}_{n}^{\prime}|_{X^{d}}$ is a closed and locally finite
family in $X^{d}$ by \cite[Lemma 3.2.16]{Ls}. Suppose $x\in
U\in\tau(X)$. If $x\in I(X)\cup (\bigcup_{n\in\mathbb{N}}D_{n})$, it
is obvious that there exists $m\in \mathbb{N}$ such that
$x\in\mbox{st}^{\circ}(x, \mathcal{R}_{m})\subset U$. If $x\in
X-(I(X)\cup (\bigcup_{n\in\mathbb{N}}D_{n}))$, then
$x\in\mbox{st}(x, \mathcal{R}_{n})=\mbox{st}(x, \mathcal{P}_{n})$.
Thus $X$ is a developable space at non-isolated points by Theorem
2.3.
\end{proof}

\begin{example}
Let $X=\mathbb{N}\cup\{p\}$, here $p\in\beta\mathbb{N}-\mathbb{N}$,
endowed with the subspace topology of Stone-\v{C}ech
compactification $\beta\mathbb{N}$. Then $X^{d}=\{p\}$ is a
metrizable subspace of $X$. Since $X$ is not first-countable, then
$X$ does not have a development at non-isolated points.
\end{example}

\maketitle
\section{The images of spaces with an uniform base at non-isolated points}

In this section invariant properties of spaces with a development at
non-isolated points and spaces with an uniform base at non-isolated
points are discussed under perfect maps or closed and open maps.

A space $X$ is called {\it metacompact} if every open cover of $X$
has a point-finite open refinement.

\begin{lemma}
For a space $X$, $X^{d}$ is a metacompact subspace of $X$ if and
only if every open cover of $X$ has an open refinement which is
point-finite at non-isolated points.
\end{lemma}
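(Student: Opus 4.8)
The plan is to prove both implications by translating between open covers of $X$ and open covers of the subspace $X^{d}$, exploiting the defining fact that every point of $X^{d}$ is a non-isolated point of $X$. Consequently, for any family restricted to $X^{d}$, the property ``point-finite at non-isolated points'' collapses to ordinary point-finiteness in $X^{d}$, and this is the hinge of the whole argument.

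For sufficiency ($\Leftarrow$), suppose every open cover of $X$ admits an open refinement that is point-finite at non-isolated points, and let $\mathcal{V}$ be an arbitrary open cover of the subspace $X^{d}$. First I would lift $\mathcal{V}$ to a cover of $X$: for each $V\in\mathcal{V}$ pick an open set $U_{V}$ in $X$ with $U_{V}\cap X^{d}=V$, and set $\mathcal{W}=\{U_{V}:V\in\mathcal{V}\}\cup\mathcal{I}(X)$. This is an open cover of $X$, since the $U_{V}$ cover $X^{d}$ and $\mathcal{I}(X)$ covers $I(X)$. By hypothesis $\mathcal{W}$ has an open refinement $\mathcal{G}$ that is point-finite at non-isolated points. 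Then $\mathcal{G}|_{X^{d}}=\{G\cap X^{d}:G\in\mathcal{G}\}$ is an open refinement of $\mathcal{V}$ in $X^{d}$: a member $G\subset U_{V}$ restricts into $V$, while a member contained in an isolated singleton restricts to $\emptyset$ and is discarded. Moreover $\mathcal{G}|_{X^{d}}$ is point-finite in $X^{d}$, because each $x\in X^{d}$ is non-isolated in $X$ and hence lies in only finitely many members of $\mathcal{G}$. Thus $\mathcal{V}$ has a point-finite open refinement, so $X^{d}$ is metacompact.

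For necessity ($\Rightarrow$), suppose $X^{d}$ is metacompact and let $\mathcal{U}$ be an open cover of $X$. Restricting gives the open cover $\mathcal{U}|_{X^{d}}$ of $X^{d}$, which by metacompactness has a point-finite open refinement $\{V_{\alpha}\}$. For each $\alpha$ I would choose $U_{\alpha}\in\mathcal{U}$ with $V_{\alpha}\subset U_{\alpha}\cap X^{d}$ and an open set $W_{\alpha}$ in $X$ with $W_{\alpha}\cap X^{d}=V_{\alpha}$, then replace $W_{\alpha}$ by $W_{\alpha}\cap U_{\alpha}$, so that $W_{\alpha}\subset U_{\alpha}$ while still $W_{\alpha}\cap X^{d}=V_{\alpha}$. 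Setting $\mathcal{G}=\{W_{\alpha}\}\cup\mathcal{I}(X)$ yields an open refinement of $\mathcal{U}$ that covers $X$, since the $W_{\alpha}$ cover $X^{d}$ and the singletons cover $I(X)$. Finally, for a non-isolated point $x$ one has $x\in W_{\alpha}$ exactly when $x\in V_{\alpha}$, and no member of $\mathcal{I}(X)$ contains $x$; as $\{V_{\alpha}\}$ is point-finite, $x$ belongs to only finitely many members of $\mathcal{G}$, so $\mathcal{G}$ is point-finite at non-isolated points.

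I do not expect a deep obstacle here, as the whole proof is a careful transfer across $X^{d}$. The two places demanding precision are: ensuring the refinements still cover the isolated points of $X$ without spoiling point-finiteness at non-isolated points, which is handled uniformly by adjoining $\mathcal{I}(X)$; and, in necessity, shrinking the lifted sets $W_{\alpha}$ to lie inside members of $\mathcal{U}$ while preserving their trace $V_{\alpha}$ on $X^{d}$. The genuinely load-bearing step is the verification of point-finiteness at non-isolated points, since it is precisely there that the identification of $X^{d}$ with the non-isolated points of $X$ is invoked.
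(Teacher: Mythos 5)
Your proof is correct and follows essentially the same route as the paper's: the necessity direction lifts a point-finite open refinement of $\mathcal{U}|_{X^{d}}$ back to $X$ exactly as the paper does, and the sufficiency direction (which the paper dismisses as obvious) is spelled out in the natural way. Your explicit adjunction of $\mathcal{I}(X)$ so that the lifted family actually covers $I(X)$ is a small point of care that the paper's own proof omits.
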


\begin{proof}
Sufficiency is obvious. We only prove the necessity.

Necessity. Let $X^{d}$ be a metacompact subspace of $X$. For every
open cover $\mathcal{U}$ for $X$, it is easy to see that
$\mathcal{U}|_{X^{d}}$ is an open cover for subspace $X^{d}$. Since
$X^{d}$ is a metacompact subspace, there exists an open and
point-finite refinement $\mathcal{V}$(in $X^{d}$) for
$\mathcal{U}|_{X^{d}}$. For every $V\in \mathcal{V}$, there exist
$U\in \mathcal{U}$ and $W(V)\in\tau (X)$ such that $V=W(V)\cap
X^{d}$ and $W(V)\subset U$. Put
$$\mathcal{W}=\{W(V):V\in \mathcal{V}\}.$$
Then $\mathcal{W}$ is an open refinement for $\mathcal{U}$ and also
point-finite at non-isolated points.
\end{proof}

\begin{lemma}
Let $X$ be a topological space. Then the following conditions are
equivalent:
\begin{enumerate}
\item $X$ is an open boundary-compact image of a metric space;

\item $X$ has an uniform base at non-isolated points;

\item $X$ has a point-finite development at non-isolated points;

\item $X$ has a development
at non-isolated points, and $X^{d}$ is a metacompact subspace of
$X$.
\end{enumerate}
\end{lemma}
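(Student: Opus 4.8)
The plan is to treat $(1)\Leftrightarrow (2)$ as known, since it is exactly the characterization of uniform bases at non-isolated points proved in \cite{LL} and recalled in the introduction. It then suffices to close the circle by establishing $(1)\Rightarrow (3)$, $(3)\Rightarrow (2)$, and the pair $(3)\Leftrightarrow (4)$: together with the cited $(2)\Rightarrow (1)$ these give $(1)\Rightarrow (3)\Rightarrow (2)\Rightarrow (1)$ and adjoin $(4)$. A convenient preliminary reduction I would record first is a reformulation of the uniform-base condition: a base $\mathcal{P}$ is a uniform base at non-isolated points if and only if for every non-isolated $x$ and every neighborhood $U$ of $x$ the set $\{P\in\mathcal{P}:x\in P,\ P\not\subset U\}$ is finite. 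One direction is immediate, and for the other a countably infinite subfamily violating the star-shrinking can be extracted from an infinite set of such ``bad'' members, contradicting the definition.

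For $(1)\Rightarrow (3)$ I would exploit that $f\colon M\to X$ is open. The key observation is that if $y\in X$ is non-isolated then $f^{-1}(y)$ has empty interior: otherwise $f$ would carry a nonempty open subset of a fiber onto the open set $\{y\}$, making $y$ isolated. Hence for non-isolated $y$ we have $f^{-1}(y)=\partial f^{-1}(y)$, which is compact by boundary-compactness. Now fix a development $\{\mathcal{B}_n\}_n$ of the metric space $M$ with each $\mathcal{B}_n$ locally finite and $\mathcal{B}_{n+1}$ refining $\mathcal{B}_n$, and set $\mathcal{U}_n=\{f(B):B\in\mathcal{B}_n\}$, an open cover of $X$. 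Point-finiteness at a non-isolated $y$ holds because $y\in f(B)$ exactly when $B$ meets the compact set $f^{-1}(y)$, and a locally finite family has only finitely many members meeting a compact set. For the development property at non-isolated $y$, given $y\in V$ open, the compact fiber $f^{-1}(y)$ lies in the open set $f^{-1}(V)$, so $\mbox{st}(f^{-1}(y),\mathcal{B}_n)\subset f^{-1}(V)$ for some $n$ by compactness and the nesting of stars; applying $f$ yields $\mbox{st}(y,\mathcal{U}_n)=f(\mbox{st}(f^{-1}(y),\mathcal{B}_n))\subset V$ while $y\in\mbox{st}(y,\mathcal{U}_n)$. Thus $\{\mathcal{U}_n\}_n$ is a point-finite development at non-isolated points. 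For $(3)\Rightarrow (2)$, let $\{\mathcal{U}_n\}_n$ be such a development, arranged so that $\mathcal{I}(X)\subset\mathcal{U}_n$ and $\mathcal{U}_{n+1}$ refines $\mathcal{U}_n$, and put $\mathcal{P}=\bigcup_n\mathcal{U}_n$. This is a base, and to see it is uniform at non-isolated points I would invoke the reformulation above: given non-isolated $x$ and neighborhood $U$, choose $N$ with $\mbox{st}(x,\mathcal{U}_N)\subset U$, so every member of $\mathcal{U}_n$ with $n\ge N$ containing $x$ lies in $U$, whereas the remaining ``bad'' members come only from the finitely many levels below $N$, each contributing finitely many sets by point-finiteness at $x$.

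The pair $(3)\Leftrightarrow (4)$ rests on Lemma 3.1. For $(4)\Rightarrow (3)$, take a development $\{\mathcal{U}_n\}_n$ at non-isolated points by covers; since $X^{d}$ is metacompact, Lemma 3.1 supplies for each $n$ an open refinement $\mathcal{H}_n$ of $\mathcal{U}_n$ that is point-finite at non-isolated points, and because $\mbox{st}(x,\mathcal{H}_n)\subset\mbox{st}(x,\mathcal{U}_n)$ with $x\in\mbox{st}(x,\mathcal{H}_n)$, the family $\{\mathcal{H}_n\}_n$ is again a development at non-isolated points, now point-finite. The reverse $(3)\Rightarrow (4)$ is the main obstacle, as it amounts to deriving metacompactness of $X^{d}$ from a point-finite development — the ``at non-isolated points'' analogue of the classical fact that a point-finite development yields metacompactness. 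I would prove it directly: the traces on $X^{d}$ form a genuine point-finite development of the subspace $X^{d}$, so it is enough to show that any space with a point-finite development $\{\mathcal{G}_n\}_n$ is metacompact. Given an open cover $\mathcal{U}$, let $m(z)$ be the least $n$ with $z$ lying in some $G\in\mathcal{G}_n$ contained in a member of $\mathcal{U}$, choose such a $G_z$, and consider $\{G_z:z\in X^{d}\}$, which refines $\mathcal{U}$ and covers $X^{d}$. The delicate point — and the heart of the argument — is point-finiteness: if a non-isolated $y$ lies in infinitely many distinct $G_z$, then the nesting of stars forces the levels $m(z)$ to be bounded by some $N$ (a star below $N$ containing both $y$ and $z$ would already place $z$ at an earlier level), after which point-finiteness of each $\mathcal{G}_k$ at $y$ for $k\le N$ leaves only finitely many distinct $G_z$, a contradiction. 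This gives metacompactness of $X^{d}$, and since a point-finite development at non-isolated points is in particular a development at non-isolated points, $(4)$ follows.
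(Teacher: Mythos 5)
Your proposal is correct, but it is organized quite differently from the paper's proof and does substantially more from scratch. The paper simply cites $(1)\Leftrightarrow(2)\Leftrightarrow(3)$ from \cite{LL} and only adds $(1)\Rightarrow(4)\Rightarrow(3)$: for $(1)\Rightarrow(4)$ it pushes a locally finite open refinement of $f^{-1}(\mathcal{U})$ forward through the open map and invokes Lemma 3.1 to get metacompactness of $X^{d}$, and its $(4)\Rightarrow(3)$ is the same level-by-level refinement you give. You, by contrast, take only $(1)\Leftrightarrow(2)$ as known and close the cycle yourself with three ingredients the paper does not spell out: (a) the Aleksandrov-style reformulation of a uniform base at non-isolated points (finitely many members containing $x$ escape any given neighborhood), which makes $(3)\Rightarrow(2)$ a two-line check once the levels are nested; (b) a direct construction for $(1)\Rightarrow(3)$ by pushing forward a nested locally finite development of the metric domain, hinging on the observation that for an open map the fiber over a non-isolated point has empty interior and hence equals its (compact) boundary — this is exactly the fact the paper uses implicitly when it asserts $f(\mathcal{V})$ is point-finite at non-isolated points; and (c) the classical argument that a point-finite development yields metacompactness, applied to the traces on $X^{d}$, for $(3)\Rightarrow(4)$ (your choice of minimal level $m(z)$ and the bound on levels via a set of $\mathcal{G}_{N}$ containing both $y$ and $z$ inside $\mbox{st}(y,\mathcal{G}_{N})$ is the standard and correct way to get point-finiteness of the refinement). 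The trade-off: the paper's route is shorter because it leans on \cite{LL} and obtains metacompactness of $X^{d}$ directly from the map rather than from the development; your route is self-contained modulo the genuinely hard direction $(2)\Rightarrow(1)$ (constructing the metric preimage), which is the right thing to leave to the citation. One bookkeeping point worth stating explicitly if you write this up: in both (a) and (c) you need each level of the development to refine the previous one while remaining point-finite at non-isolated points; this is achieved by replacing $\mathcal{U}_{n}$ with the finite meets $\mathcal{U}_{1}\wedge\cdots\wedge\mathcal{U}_{n}$, which preserves both properties.
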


\begin{proof}
$(1)\Leftrightarrow (2)\Leftrightarrow (3)$ was proved in
\cite{LL}.\ We only need to prove $(1)\Rightarrow (4)\Rightarrow
(3)$.

$(1)\Rightarrow (4)$.\ \ Let $f:M\rightarrow X$ be an open
boundary-compact mapping, where $M$ is a metric space. Let
$\mathcal{U}$ be an open cover for $X$. Then $f^{-1}(\mathcal{U})$
is an open cover for $M$. Since $M$ is paracompact, there exists a
locally finite open refinement $\mathcal{V}$ of
$f^{-1}(\mathcal{U})$. It is easy to see that $f(\mathcal{V})$ is
point-finite at non-isolated points, and refines $\mathcal{U}$.
Hence $X^{d}$ is metacompact by Lemma 3.1.

$(4)\Rightarrow (3)$.\ \ Let $\{\mathcal{U}_{n}\}_{n}$ be a
development at non-isolated points of $X$. For every $n\in
\mathbb{N}$, since $X^{d}$ is metacompact, $\mathcal{U}_{n}$ has an
open refinement $\mathcal{V}_{n}$ which is point-finite at
non-isolated points. Hence $\{\mathcal{V}_{n}\}_{n}$ is a
point-finite development at non-isolated points.
\end{proof}

Let $\bigcup_{n\in \mathbb{N}}\mathcal{P}_{n}$ be a pair-network for
a space $X$. We say that $\bigcup_{n\in \mathbb{N}}\mathcal{P}_{n}$
satisfies ($\star$) if it has the (i) of Corollary 2.4. That is, let
($\star$) be the condition:

($\star$) For any $n\in\mathbb{N}$, $\mathcal{I}_{\Delta}(X)\subset
\mathcal{P}_{n}$ and $P^{\prime}\subset X^{d}$ for any $P\in
\mathcal{P}_{n}-\mathcal{I}_{\Delta}(X)$.

\begin{theorem}
Spaces with a development at non-isolated points are preserved by
perfect maps.
\end{theorem}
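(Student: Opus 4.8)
The plan is to use the characterization of Theorem 2.1, which says that a space has a development at non-isolated points if and only if it is quasi-developable and its derived set is a perfect subspace. So let $f:X\to Y$ be a perfect map, and suppose $X$ has a development at non-isolated points. I would verify the two conditions for $Y$ separately. The quasi-developability half is the routine half: quasi-developability is known to be preserved by perfect maps (indeed by closed maps with the relevant compactness), so I would either cite this or reprove it by pushing forward the pair-network/quasi-development of $X$ through $f$ and using compactness of fibers to control $\mathrm{st}(y,\cdot)$ inside a given open set. The genuinely new work is to show $Y^{d}$ is a perfect subspace of $Y$, i.e.\ every closed subset of $Y^{d}$ is a $G_\delta$ in $Y^{d}$.

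The first step I would carry out is to understand how $f$ relates $X^{d}$ and $Y^{d}$. Since $f$ is perfect (in particular closed with compact fibers), I expect that a point $y\in Y$ is non-isolated essentially because some point of its fiber $f^{-1}(y)$ is non-isolated, or because the fiber accumulates; more precisely I would show $f^{-1}(Y^{d})$ is governed by $X^{d}$ together with finitely-many-fiber considerations, and that $f|_{X^{d}}$ (after suitable restriction) maps onto $Y^{d}$ as a perfect map. The key structural fact to establish is that $f$ restricts to a perfect map from the subspace $f^{-1}(Y^{d})$ onto $Y^{d}$, and that the relevant preimage lives inside (a closed set closely tied to) $X^{d}$, so that $X^{d}$ being perfect can be transferred.

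The main engine is the standard fact that \emph{perfect maps preserve the property of being a perfect space}: if $B\subset Y^{d}$ is closed, then $f^{-1}(B)\cap X^{d}$ is closed in $X^{d}$, hence a $G_\delta$ by hypothesis, say $f^{-1}(B)\cap X^{d}=\bigcap_n V_n$ with $V_n$ open in $X^{d}$; I would then push these forward using the closed map $f$ via the identity $Y\setminus f(X\setminus V_n)=\{y:f^{-1}(y)\subset V_n\}$, which is open because $f$ is closed. Intersecting over $n$ and using compactness of fibers to replace ``$f^{-1}(y)\cap X^{d}\subset V_n$ for all $n$'' by ``$f^{-1}(y)\cap X^{d}\subset\bigcap_n V_n=f^{-1}(B)$'' should recover exactly $B$ as a countable intersection of open subsets of $Y^{d}$. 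Thus $Y^{d}$ is perfect.

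The step I expect to be the main obstacle is the bookkeeping at the boundary between isolated and non-isolated points: namely, proving that the perfect image of $X^{d}$ sits correctly inside $Y^{d}$, so that isolated points of $Y$ are not spuriously created or destroyed and so that the $G_\delta$ representation I build genuinely lands in $Y^{d}$ rather than in $Y$. Concretely, one must handle points $y\in Y^{d}$ whose fibers meet $I(X)$ but accumulate, and ensure the compact-fiber argument correctly separates ``$f^{-1}(y)\subset V_n$'' from membership in $B$; getting the finite-subcover step to interact cleanly with the decreasing sets $V_n$ (after passing to a decreasing sequence $V_1\supset V_2\supset\cdots$) is where the care is needed. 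Once the interplay of $X^{d}$, $Y^{d}$, and the closedness/compactness of $f$ is pinned down, both required properties follow and Theorem 2.1 delivers that $Y$ has a development at non-isolated points.
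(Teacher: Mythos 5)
Your route is genuinely different from the paper's: you split the problem via Theorem~2.1 into (a) $Y$ is quasi-developable and (b) $Y^{d}$ is a perfect subspace, whereas the paper works entirely with the pair-network characterization of Theorem~2.3, pushing a pair-network satisfying condition (2) and ($\star$) forward through $f$ and verifying condition (3) for $Y$. Your half (b) is essentially correct and can be completed cleanly: since $f$ is closed and fibers consisting of isolated points are open, $Y^{d}\subset f(X^{d})$; writing a closed $B\subset Y^{d}$ as $f^{-1}(B)\cap X^{d}=\bigcap_{n}V_{n}$ with $V_{n}$ open in $X^{d}$, the sets $W_{n}=V_{n}\cup I(X)$ are open in $X$ (as $X^{d}$ is closed), $G_{n}=\bigl(Y-f(X-W_{n})\bigr)\cap Y^{d}$ is open in $Y^{d}$, and $B=\bigcap_{n}G_{n}$ follows because every $y\in Y^{d}$ has $f^{-1}(y)\cap X^{d}\neq\emptyset$. (Compactness of fibers is not actually needed here, only closedness.)

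The genuine gap is in half (a). You assert that quasi-developability is ``known to be preserved by perfect maps'' and propose, as a fallback, a routine push-forward using compactness of fibers; but this is precisely the hard step, and the naive argument fails. A quasi-development $\{\mathcal{Q}_{n}\}_{n}$ is neither a sequence of covers nor nested, so for a compact fiber $f^{-1}(y)\subset f^{-1}(U)$ you get a different index $n_{x}$ at each $x\in f^{-1}(y)$ with no way to combine them into a single $m$ satisfying $\mathrm{st}(f^{-1}(y),\mathcal{Q}_{m})\subset f^{-1}(U)$; moreover the fiber may contain infinitely many isolated points of $X$, at which a development at non-isolated points gives no control of stars at all. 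Overcoming exactly these two obstructions is the content of the paper's Theorem~2.3(2): one first manufactures a pair-network whose second condition provides a \emph{single} $m$ with $K\subset\mathrm{st}(K,\mathcal{P}_{m})\subset U$ for every compact $K\subset U$, and whose condition ($\star$) pairs each isolated point only with $(\{x\},\{x\})$ so that isolated points in fibers contribute nothing to stars. Without either a correct citation (Burke's paper \cite{Bu}, the source of the pair-network technique, is the natural place to look, and any such citation must be checked against the paper's standing hypotheses, which are only Hausdorff) or a reconstruction of this machinery, your proposal leaves the decisive step unproved.
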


\begin{proof}
Let $f:X\rightarrow Y$ be a perfect map, where $X$ is developable at
non-isolated points. Let $\bigcup_{n\in \mathbb{N}}\mathcal{P}_{n}$
be a pair-network which satisfies the condition (2) in Theorem 2.3
for $X$. It is easy to see that we can suppose that
$\bigcup_{n\in\mathbb{N}}\mathcal{P}_{n}$ satisfies the condition
($\star$) by the proof of $(1)\Rightarrow (2)$ in Theorem 2.3.

For any $n\in \mathbb{N}$, put
$$\mathcal{B}_{n}=\{(f(P^{\prime}), f(P^{\prime\prime})):P\in
\mathcal{P}_{n}\};$$
$$\mathcal{R}_{n}=\{(f(P^{\prime})\cap Y^{d},
f(P^{\prime\prime})):P\in
\mathcal{P}_{n}-\mathcal{I}_{\Delta}(X)\}\cup\mathcal{I}_{\Delta}(Y).$$
Since $f$ is closed, $Y^{d}\subset f(X^{d})$. It is easy to check
that $\bigcup_{n\in \mathbb{N}}\mathcal{R}_{n}$ is a pair-network
for $Y$. Next, we shall show that it satisfies the condition (3) of
Theorem 2.3 for $Y$.

(i)\ It is well-known that a locally finite family is preserved by a
perfect map. Since $f|_{X^{d}}:X^{d}\rightarrow f(X^{d})$ is a
perfect map and $\mathcal{P}'_{n}|_{X^{d}}$ is closed and locally
finite in $X^{d}$, $\{f(P'\cap X^{d}): P\in\mathcal{P}_{n}\}$ is
closed and locally finite in $f(X^{d})$, then $$\{f(P'\cap
X^{d})\cap Y^{d}:
P\in\mathcal{P}_{n}-\mathcal{I}_{\Delta}(X)\}=\mathcal{R}_{n}^{\prime}|_{Y^{d}}$$
is closed and locally finite in $Y^{d}$ by the condition ($\star$).

(ii)\ Let $y\in U\in\tau (Y)$. We can suppose that $y\in Y^{d}$.
Since $f^{-1}(y)$ is compact for $X$, there exists $m\in \mathbb{N}$
such that
$$f^{-1}(y)\subset \mbox{st}(f^{-1}(y),
\mathcal{P}_{m})\subset f^{-1}(U).$$ Since $f$ is closed and
$\mbox{st}(f^{-1}(y), \mathcal{P}_{m})$ is open in $X$, then
$$y\in\mbox{st}^{\circ}(y, \mathcal{B}_{m})\subset\mbox{st}(y, \mathcal{B}_{m})\subset U.$$
If $y\in f(P')\cap Y^{d}$ with $P\in\mathcal{I}_{\Delta}(X)$,
$f(P'')=\{y\}\subset\mbox{st}(y, \mathcal{R}_{m})$. Thus
$\mbox{st}(y, \mathcal{B}_{m})= \mbox{st}(y, \mathcal{R}_{m})$,
hence $y\in\mbox{st}^{\circ}(y, \mathcal{R}_{m})\subset U$.
\end{proof}

\begin{corollary}
Spaces with an uniform base at non-isolated points are preserved by
perfect maps.
\end{corollary}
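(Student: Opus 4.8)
The plan is to combine Theorem 3.3 with the characterization of uniform bases at non-isolated points given by Lemma 3.2. Let $f : X \to Y$ be a perfect map with $X$ having a uniform base at non-isolated points. By the equivalence $(2)\Leftrightarrow(4)$ of Lemma 3.2, $X$ is developable at non-isolated points and $X^{d}$ is a metacompact subspace of $X$. Since perfect maps preserve developability at non-isolated points (Theorem 3.3), $Y$ is developable at non-isolated points. Hence, appealing again to $(4)\Rightarrow(2)$ of Lemma 3.2, it suffices to prove that $Y^{d}$ is a metacompact subspace of $Y$.

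First I would record, exactly as in the proof of Theorem 3.3, that the closedness of $f$ forces $Y^{d}\subset f(X^{d})$: if every point of $f^{-1}(y)$ were isolated, then $f^{-1}(y)$ would be a finite open set (being compact and discrete), so $X-f^{-1}(y)$ is closed and $f(X-f^{-1}(y))$ is a closed set missing $y$, which would make $y$ isolated in $Y$. Next I would restrict $f$ to the closed set $X^{d}$. The restriction $f|_{X^{d}} : X^{d} \to f(X^{d})$ is again perfect: its fibers are the closed subsets $f^{-1}(y)\cap X^{d}$ of the compact fibers $f^{-1}(y)$, hence compact, and it is closed because a set closed in $X^{d}$ is closed in $X$ and $f$ is closed.

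The crux is then a transfer of metacompactness. Since $X^{d}$ is metacompact and metacompactness is preserved by perfect maps, $f(X^{d})$ is metacompact. Because $Y^{d}$ is closed in $Y$ and $f(X^{d})$ is closed in $Y$ (the image of the closed set $X^{d}$ under the closed map $f$), the inclusion $Y^{d}\subset f(X^{d})$ shows that $Y^{d}$ is closed in $f(X^{d})$; as metacompactness is inherited by closed subspaces, $Y^{d}$ is metacompact. With $Y$ developable at non-isolated points and $Y^{d}$ metacompact, Lemma 3.2 yields that $Y$ has a uniform base at non-isolated points.

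I expect the main obstacle to be the metacompactness transfer step, namely the assertion that the perfect image of a metacompact space is metacompact. One option is to invoke this as a classical fact, parallel to the paper's use of the perfect-map invariance of local finiteness. Alternatively, to keep the argument self-contained, I would reprove it in the present setting through Lemma 3.1: pull an open cover $\mathcal{U}$ of $Y$ back to the open cover $f^{-1}(\mathcal{U})$ of $X$, take an open refinement that is point-finite at non-isolated points (available since $X^{d}$ is metacompact), and use compactness of the fibers together with closedness of $f$ to push a suitable shrinking forward to an open refinement of $\mathcal{U}$ that is point-finite at the non-isolated points of $Y$. The care required lies in ensuring that isolated points of $X$, which may lie in infinitely many refining sets, do not spoil point-finiteness at the non-isolated points of $Y$; the containment $Y^{d}\subset f(X^{d})$ is precisely what controls this.
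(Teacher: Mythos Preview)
Your proposal is correct and follows essentially the same route as the paper: combine Lemma~3.2 and Theorem~3.3, and transfer metacompactness from $X^{d}$ to $Y^{d}$ via the restriction of $f$. The only cosmetic difference is that the paper invokes the slightly stronger classical fact that metacompactness is preserved by \emph{closed} maps (so no need to verify that $f|_{X^{d}}$ is perfect rather than merely closed), whereas you check perfectness of the restriction and use preservation under perfect maps; either version works, and your extra detail on $Y^{d}\subset f(X^{d})$ and the restriction argument just makes explicit what the paper leaves to the reader.
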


\begin{proof}
Since metacompactness is preserved by closed maps, it is easy to see
by Lemma 3.2 and Theorem 3.3.
\end{proof}

Let $\Xi$ be a topological property. $\Xi$ is said to satisfy {\it
the decomposition theorem} if, for any space $X$ with the property
$\Xi$ and any closed map $f:X\rightarrow Y$, there exists a
$\sigma$-closed discrete subset $Z\subset Y$ such that $f^{-1}(y)$
is compact in $X$ for any $y\in Y- Z$.

In \cite[Theorem 1.1]{Ch2}, J. Chaber proved that each regular
$\sigma$-space satisfies the decomposition theorem.

\begin{theorem}
Let $f:X\rightarrow Y$ be a closed map, where $X$ is a regular space
having a development at non-isolated points. If $Y$ is a
first-countable space, then $Y$ is developable at non-isolated
points.
\end{theorem}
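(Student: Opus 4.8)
The plan is to peel the closed map $f$ down to an almost-perfect situation by proving that the fibre boundaries $\partial f^{-1}(y)$ are compact, and then to repeat the transport argument of Theorem 3.3 verbatim, with the compact boundary $\partial f^{-1}(y)$ playing the role that the compact fibre $f^{-1}(y)$ played there. So first I would show each $\partial f^{-1}(y)$ is compact. Since $f$ is closed and $Y$ is first-countable, a standard argument (take a would-be closed discrete sequence in $\partial f^{-1}(y)$, attach companion points mapped off $y$, and contradict a countable base at $y$) shows each $\partial f^{-1}(y)$ is countably compact. No isolated point of $X$ can lie on the boundary of a set, so $\partial f^{-1}(y)\subset X^{d}$. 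Restricting the development at non-isolated points of $X$ to $X^{d}$ yields a development of $X^{d}$, whence $X^{d}$ has a $G_{\delta}$-diagonal and every countably compact subset of $X^{d}$ is compact. Therefore each $\partial f^{-1}(y)$ is compact, i.e.\ $f$ is a closed boundary-compact map, and $\partial f^{-1}(y)\neq\emptyset$ exactly when $y\in Y^{d}$.

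Next I would imitate the construction in Theorem 3.3. Fix a pair-network $\bigcup_{n}\mathcal{P}_{n}$ for $X$ as in Theorem 2.3(2), arranged (as in the proof of Theorem 3.3) to satisfy $(\star)$, so that $P'\subset X^{d}$ for every $P\in\mathcal{P}_{n}-\mathcal{I}_{\Delta}(X)$. For $y\in Y^{d}$ put $O_{n}(y)=\mbox{st}(\partial f^{-1}(y),\mathcal{P}_{n})\cup(\mbox{int}f^{-1}(y))$, and for $y\in I(Y)$ put $O_{n}(y)=f^{-1}(y)$; in either case $O_{n}(y)$ is open and $f^{-1}(y)\subset O_{n}(y)$. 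Define $g(n,y)=Y-f(X-O_{n}(y))$, which is open because $f$ is closed and contains $y$ because $f^{-1}(y)\subset O_{n}(y)$; intersecting the first $n$ of these makes $\{g(n,y)\}$ decreasing, so $g$ is a $g$-function for $Y$. By Theorem 2.1 it then suffices to verify the convergence condition (2) for this $g$.

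Finally I would check that condition. Suppose $y\in Y^{d}$ and $\{y,y_{n}\}\subset g(n,z_{n})$ for all $n$. Membership $y\in g(n,z_{n})$ forces $f^{-1}(y)\subset O_{n}(z_{n})$, and also $z_{n}\in Y^{d}$ (an isolated $z_{n}$ gives $O_{n}(z_{n})=f^{-1}(z_{n})$, whose small image cannot contain the distinct point $y$). Hence the compact set $\partial f^{-1}(y)$ is covered by $\mbox{st}(\partial f^{-1}(z_{n}),\mathcal{P}_{n})$, and using the compact-set clause of Theorem 2.3(2)(ii) together with the development of $X^{d}$, the boundaries $\partial f^{-1}(z_{n})$ cluster onto $\partial f^{-1}(y)$; since $f$ is closed this gives $z_{n}\to y$. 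For $y_{n}\to y$ I would lift each $y_{n}$ to some $x_{n}\in f^{-1}(y_{n})$: as $f(x_{n})=y_{n}\to y$ and $f$ is closed, the $x_{n}$ cluster at $\partial f^{-1}(y)\subset X^{d}$, and the $\mathcal{P}_{n}$-closeness forced by $f^{-1}(y_{n})\subset O_{n}(z_{n})$, combined with the compactness of $\partial f^{-1}(y)$, upgrades this to $y_{n}\to y$. Thus $Y$ satisfies Theorem 2.1(2) and is developable at non-isolated points.

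I expect the main obstacle to be exactly this last convergence check, and inside it the points $y_{n}\in I(Y)$ accumulating to $y\in Y^{d}$: the development of $Y^{d}$ gives no information about isolated points of $Y$, so that information must be recovered through the lifting $x_{n}\in f^{-1}(y_{n})$ and the compactness of $\partial f^{-1}(y)$. The other genuinely load-bearing step is the passage from countably compact to compact in the first paragraph, which is where the regularity of $X$ enters, through the $G_{\delta}$-diagonal of the developable subspace $X^{d}$.
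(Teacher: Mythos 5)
Your opening step is false, and the paper itself contains the counterexample. You claim that because $f$ is closed and $Y$ is first countable, each $\partial f^{-1}(y)$ is countably compact. The standard argument you sketch (take a closed discrete sequence $\{x_{n}\}$ in $\partial f^{-1}(y)$, attach companion points $z_{n}\in f^{-1}(U_{n})-f^{-1}(y)$ near $x_{n}$, and contradict closedness of $f$ via a countable base at $y$) only works if the set $\{z_{n}\}$ is closed discrete in $X$, and to guarantee that one must expand $\{x_{n}\}$ to a \emph{discrete} family of open sets; this requires $X$ to be normal (or countably collectionwise Hausdorff), not merely regular. Example 3.7 of the paper is exactly the failure: $X=\psi(\mathbb{N})$ is a regular Moore space with a uniform base at non-isolated points, the quotient map $f$ collapsing $\mathcal{A}$ to a point is closed, $Y=\psi(\mathbb{N})/\mathcal{A}$ is first countable, and yet $\partial f^{-1}(\{\mathcal{A}\})=\mathcal{A}$ is an uncountable closed discrete set, so it is not even countably compact. (Concretely, your companion points $z_{n}$ land in $\mathbb{N}$, and by maximality of $\mathcal{A}$ every infinite subset of $\mathbb{N}$ clusters at some member of $\mathcal{A}\subset f^{-1}(y)$, so $\{z_{n}\}$ is never closed and no contradiction with closedness of $f$ arises.) Since your whole proof is a reduction to the boundary-compact case, it collapses here; Example 3.7 shows that no such reduction is possible.

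For comparison, the paper's proof avoids this entirely: since $X^{d}$ is a Moore space and hence a regular $\sigma$-space, Chaber's decomposition theorem applies and produces a set $Z=\bigcup_{n}Z_{n}\subset Y^{d}$, with each $Z_{n}$ closed discrete in $Y^{d}$, such that $f^{-1}(y)\cap X^{d}$ is compact for every $y\in Y^{d}-Z$. For those $y$ one runs the transport argument of Theorem 3.3 on the compact set $f^{-1}(y)\cap X^{d}$ (which, by condition $(\star)$, controls the whole star of $f^{-1}(y)$ in the pair-network), while the first countability of $Y$ is used only at the exceptional points $y\in Z$, where the pairs $(\{y\},U(y,j))$ are simply added to the pair-network; closed discreteness of each $Z_{n}$ keeps condition (ii) of Corollary 2.4 intact. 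In other words, first countability of $Y$ is not there to force compact fibre boundaries --- it cannot --- but to absorb an unavoidable $\sigma$-closed-discrete set of bad points. If you want to salvage your outline, you must replace your first paragraph by an appeal to such a decomposition theorem and then treat the exceptional set separately.
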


\begin{proof}
Since subspace $X^{d}$ is a Moore space, there exists a subspace
$Z=\bigcup_{n\in\mathbb{N}}Z_{n}\subset Y^{d}$ such that, for any
$y\in Y^{d}- Z$, $f^{-1}(y)\cap X^{d}$ is a compact subset of
$X^{d}$ by \cite[Theorem 1.1]{Ch2}, where each $Z_{n}$ is closed and
discrete in $Y^{d}$. Hence $f^{-1}(y)\cap X^{d}$ is a compact subset
of $X$ for any $y\in Y^{d}- Z$. For any $y\in Z$, let $\{U(y,
n):n\in\mathbb{N}\}$ be a neighborhood base of $y$ in $Y$. Let
$\bigcup_{n\in\mathbb{N}}\mathcal{P}_{n}$ be a pair-network for $X$
satisfying the condition (2) of Theorem 2.3, and the condition
($\star$) by the proof of $(1)\Rightarrow (2)$ in Theorem 2.3.

For any $n, j\in \mathbb{N}$, let
$$\mathcal{W}_{n}=\{(f(P^{\prime}), f(P^{\prime\prime})):P\in
\mathcal{P}_{n}\},$$
$$\mathcal{R}_{n}=\{(f(P^{\prime})\cap Y^{d},
f(P^{\prime\prime})):P\in
\mathcal{P}_{n}-\mathcal{I}_{\Delta}(X)\}\cup
\mathcal{I}_{\Delta}(Y),$$
$$\mathcal{H}_{n,j}=\{(\{y\}, U(y,
j)):y\in Z_{n}\}\cup \mathcal{I}_{\Delta}(Y).$$ Then
$$(\bigcup_{n\in\mathbb{N}}\mathcal{R}_{n})\cup(\bigcup_{n,
j\in\mathbb{N}}\mathcal{H}_{n,j})\cup\mathcal{I}_{\Delta}(Y)$$ is a
pair-network for $Y$ and satisfies the conditions (i) and (ii) of
Corollary 2.4 because a hereditarily closure-preserving family is
preserved by a closed map. We only need to prove that it also
satisfies (iii) in Corollary 2.4. For any $y\in U\in \tau (Y)$, we
discuss the following three cases respectively.

(a)\ \ If $y\in Z$, then there exist $n\in \mathbb{N}$ and
$j\in\mathbb{N}$ such that $y\in Z_{n}$ and $U(y, j)\subset U$.
Hence $y\in\mbox{st}^{\circ}(y, \mathcal{H}_{n,j})\subset U(y,
j)\subset U$.

(b)\ \ If $y\in Y^{d}- Z$, then $f^{-1}(y)\cap X^{d}$ is a compact
subset for $X$. There exists $m\in \mathbb{N}$ such that
$$f^{-1}(y)\cap X^{d}\subset\mbox{st}(f^{-1}(y)\cap X^{d},
\mathcal{P}_{m})\subset f^{-1}(U),$$ then
$$f^{-1}(y)\subset\mbox{st}(f^{-1}(y),
\mathcal{P}_{m})$$ $$=\mbox{st}(f^{-1}(y)\cap X^{d},
\mathcal{P}_{m})\cup\mbox{st}(f^{-1}(y)\cap I(X),
\mathcal{P}_{m})\subset f^{-1}(U),$$ thus $y\in\mbox{st}^{\circ}(y,
\mathcal{W}_{m})\subset U$. Since $\mbox{st}(y, \mathcal{R}_{m})=
\mbox{st}(y, \mathcal{W}_{m})$, $y\in\mbox{st}^{\circ}(y,
\mathcal{R}_{m})\subset U$.

(c)\ \ If $y\in I(Y)$, then $y\in\mbox{st}(y,
\mathcal{I}_{\Delta}(Y))=\{y\}\subset U$.

Hence $Y$ is a developable space at non-isolated points by Corollary
2.4.
\end{proof}

\begin{corollary}
Regular spaces with an uniform base at non-isolated points are
preserved by open and closed maps.
\end{corollary}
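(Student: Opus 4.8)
The plan is to deduce everything from the earlier structural results by passing, via Lemma 3.2, between ``uniform base at non-isolated points'' and the pair of conditions ``development at non-isolated points together with $X^{d}$ metacompact.'' Let $f:X\rightarrow Y$ be open and closed with $X$ a regular space having a uniform base at non-isolated points. By Lemma 3.2 (the equivalence $(2)\Leftrightarrow(4)$) the space $X$ then has a development at non-isolated points and $X^{d}$ is a metacompact subspace; moreover $X$ is first-countable, since the development at non-isolated points supplies a countable neighborhood base at each point of $X^{d}$ while each isolated point has the base $\{\{x\}\}$. My aim is to verify for $Y$ the two conditions in part $(4)$ of Lemma 3.2 and then invoke that lemma once more.

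First I would show that $Y$ is first-countable, and this is precisely where the openness of $f$ is used. Given $y\in Y$, pick $x\in f^{-1}(y)$ with a countable neighborhood base $\{B_{n}\}_{n}$ at $x$; since $f$ is open each $f(B_{n})$ is an open neighborhood of $y$, and if $y\in V\in\tau(Y)$ then $x\in f^{-1}(V)$ forces $B_{n}\subset f^{-1}(V)$ for some $n$, whence $f(B_{n})\subset V$. Thus $\{f(B_{n})\}_{n}$ is a countable neighborhood base at $y$. Now $f$ is closed, $X$ is regular with a development at non-isolated points, and $Y$ is first-countable, so Theorem 3.5 applies verbatim and gives that $Y$ is developable at non-isolated points.

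Next I would treat the metacompactness of $Y^{d}$. Because $I(X)$ is open, $X^{d}$ is closed in $X$, so the restriction $f|_{X^{d}}:X^{d}\rightarrow f(X^{d})$ is a closed surjection; since metacompactness is preserved by closed maps (as already used in the proof of Corollary 3.4), $f(X^{d})$ is metacompact. As $f$ is closed one has $Y^{d}\subset f(X^{d})$ by the argument given in the proof of Theorem 3.3, and $Y^{d}$ is closed in $Y$, hence closed in $f(X^{d})$; metacompactness being hereditary to closed subspaces, $Y^{d}$ is metacompact. Combining this with the previous paragraph, condition $(4)$ of Lemma 3.2 holds for $Y$, so by the equivalence $(4)\Leftrightarrow(2)$ there, $Y$ has a uniform base at non-isolated points.

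The only genuinely new ingredient beyond the perfect-map case of Corollary 3.4 is securing the first-countability of $Y$, which is the standing hypothesis of Theorem 3.5; I expect this to be the main point to get right, since Theorem 3.5, unlike Theorem 3.3, does not deliver developability of $Y$ for free. Everything else is bookkeeping: the metacompactness half reuses the closedness of $X^{d}$ and the inclusion $Y^{d}\subset f(X^{d})$, and the final assembly is just a second application of Lemma 3.2.
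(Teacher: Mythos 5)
Your proposal is correct and follows essentially the same route as the paper: use openness of $f$ to transfer first-countability to $Y$, invoke Theorem 3.5, and finish with Lemma 3.2. In fact you are more careful than the paper's one-line proof, which jumps from Theorem 3.5 (which only yields that $Y$ is developable at non-isolated points) straight to the conclusion; your explicit verification that $Y^{d}$ is metacompact, via closedness of $f|_{X^{d}}$ and the inclusion $Y^{d}\subset f(X^{d})$, is exactly the step needed to apply the equivalence $(4)\Leftrightarrow(2)$ of Lemma 3.2 and close that gap.
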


\begin{proof}
Let $f:X\rightarrow Y$ be an open and closed map, where $X$ is a
regular space having an uniform base at non-isolated points. Since
$f$ is open and closed, $Y$ is regular and first-countable space,
thus $Y$ has an uniform base at non-isolated points by Theorem 3.5.
\end{proof}

A collection $\mathcal C$ of subsets of an infinite set $D$ is said
to be {\it almost disjoint} if $A\cap B$ is finite whenever
$A\not=B\in\mathcal C$. Let $\mathcal A$ be an almost disjoint
collection of countably infinite subsets of $D$ and maximal with
respect to the properties. Isbell-Mr\'{o}wka space $\psi(D)$ is the
set $\mathcal A\cup D$ endowed with a topology as follows \cite{Mr}:
The points of $D$ are isolated. Basic neighborhoods of a point
$A\in\mathcal A$ are the sets of the form $\{A\}\cup (A-F)$ where
$F$ is a finite subset of $D$.

\begin{example}
There exists a closed map $f:X\rightarrow Y$, where $X$ is a regular
space with an uniform base at non-isolated points and $Y$ is a
first-countable space. However, $f$ is not a boundary-compact map.
\end{example}

\begin{proof}
Let $\mathcal A$ be an almost disjoint collection of countably
infinite subsets of $\mathbb{N}$ and maximal with respect to the
properties. Let $\psi(\mathbb{N})=\mathcal{A}\cup\mathbb{N}$ be the
Isbell-Mr\'{o}wka space. Then $\psi(\mathbb{N})$ is a regular space
with an uniform base at non-isolated points.

Define $f:\psi(\mathbb{N})\rightarrow \psi(\mathbb{N})/\mathcal A$
by a quotient map, then $f$ is a closed map and the quotient space
$\psi(\mathbb{N})/\mathcal A$ is a first-countable space. Since
$\partial f^{-1}(\{\mathcal A\})=\mathcal A$ is discrete in
$\psi(\mathbb{N})$, $f$ is not boundary-compact.
\end{proof}

Since a regular space with an uniform base is a $\sigma$-space,
regular spaces with an uniform base satisfy the decomposition
theorem. But regular spaces with an uniform base at non-isolated
points don't satisfy the decomposition theorem.

\begin{example}
There are a regular space $X$ with an uniform base at non-isolated
points and a closed map $f:X\to Y$ such that $f$ does not satisfy
the decomposition theorem.

Let $Y$ be the Isbell-Mr\'{o}wka space $\psi (D)$, where $D$ is an
uncountable set. Let $S_{1}=\{0\}\cup\{1/n:n\in \mathbb{N}\}$ be the
subspace of the real line $\mathbb{R}$. Put $$X=Y\times S_{1}-
(D\times\{0\}),$$ endowed with the subspace topology of product
topology. Then $X$ is a regular space. Let $f:X\rightarrow Y$ be the
projective map. Then $f$ is a closed map.

Let $\psi(D)=\mathcal{A}\cup D$, where
$\mathcal{A}=\{A_{\alpha}\}_{\alpha\in \Lambda}$ and each
$A_{\alpha}=\{x(\alpha, n):n\in \mathbb{N}\}\subset D$. Put

\hspace{1.5cm}$V_{n}(\alpha)=\{x(\alpha, m):m\geq
n\}\cup\{A_{\alpha}\}$,

\hspace{1.5cm}$U_{n}(0)=\{0\}\cup\{1/m:m\geq n\}$,

\hspace{1.5cm}$\mathcal{B}=\{\{(x, y)\}:(x, y)\in D\times (S_{1}-
\{0\})\}$

\hspace{2.5cm}$\cup\{V_{n}(\alpha)\times U_{n}(0):n\in\mathbb{N},
\alpha\in \Lambda\}\cup \{V_{m}(\alpha)\times \{1/n\}:m,
n\in\mathbb{N}\}$.\\
It is easy to see that $\mathcal{B}$ is an uniform base at
non-isolated points for $X$. However, $f^{-1}(y)=\{y\}\times
(S_{1}-\{0\})$ is not compact in $X$ for any $y\in D$. Since any
closed (in $Y$) subset contained in $D$ is finite, $D$ is not a
$\sigma$-discrete subspace for $Y$. Thus $f:X\rightarrow Y$ does not
satisfy the decomposition theorem.
\end{example}

The authors would like to thank the referee for his/her valuable
suggestions.

\end{document}